%Format: Latex2e
\documentclass[12pt]{amsart}
\usepackage{a4wide,amssymb}
\usepackage{hyperref}
\usepackage{todonotes}
\date{}

\newtheorem{theorem}{Theorem}
%[section]
\newtheorem{corollary}[theorem]{Corollary}%[section]
\newtheorem{proposition}[theorem]{Proposition}%[section]
\newtheorem{question}[theorem]{Question}

\newtheorem{example}[theorem]{Example}
\theoremstyle{definition}
%[section]

%\newcommand{\I}{\mathcal I}
\newcommand{\M}{\mathcal M}
\newcommand{\A}{\mathcal A}

\def\N{\mathbb N}
\def\IN{\mathbb N}

\def\R{\mathbb R}
\def\S{\mathbb S}
\def\T{\mathbb T}
\def\Z{\mathbb Z}
\newcommand{\w}{\omega}

\newcommand{\cov}{\mathrm{cov}}
\newcommand{\cof}{\mathrm{cof}}

\begin{document}
\title{On pseudobounded and premeager paratopological groups}

\author[Taras Banakh]{Taras Banakh}
\address{Taras Banakh: Ivan Franko National University of Lviv (Ukraine) and
Jan Kochanowski University in Kielce (Poland)}
\email{t.o.banakh@gmail.com}
\author[Alex Ravsky]{Alex Ravsky}
\address{Alex Ravsky: Pidstryhach Institute for Applied Problems of Mechanics and Mathematics
National Academy of Sciences of Ukraine}
\email{alexander.ravsky@uni-wuerzburg.de}
\subjclass{22A15,54H11,54D10}%MSC 2020

\begin{abstract}
Let $G$ be a paratopological group.
Following F.~Lin and S.~Lin, we say that the group $G$ is pseudobounded,
if for any neighborhood $U$ of the identity of $G$,
there exists a natural number $n$ such that $U^n=G$.
The group $G$ is $\omega$-pseudobounded,
if for any neighborhood $U$ of the identity of $G$, the group $G$ is a
union of sets $U^n$, where $n$ is a natural number.
The group $G$ is premeager, if $G\ne N^n$ for any nowhere dense subset $N$ of
$G$ and any positive integer $n$.
In this paper we investigate relations between the above classes of groups and
answer some questions posed by F. Lin, S. Lin, and S\'anchez.
\end{abstract}

\maketitle \baselineskip15pt

A {\em topologized group} $(G,\tau)$ is a group $G$ endowed with a topology $\tau$.
A \emph{left topological group} is a topologized group such that
each left shift $G\to G$, $x\mapsto gx$, $g\in G$, is continuous.
A \emph{semitopological group} $G$ is a topologized group such that
the multiplication map $G\times G\to G$, $(x,y)\mapsto xy$, is separately continuous.
Moreover, if the multiplication is continuous then $G$ is called a {\it paratopological group}.
%and the topology $\tau$ is called a {\it semigroup topology} on $G$.
A paratopological group with the continuous inversion map $G\to G$, $x\mapsto x^{-1}$, is called a
\emph{topological group}. A classical example of a paratopological group failing to be a
topological group is the \emph{Sorgenfrey line} $\mathbb S$, that is the group $\mathbb R$ endowed
with the topology generated by the base consisting of all half-intervals $[a,b)$, $a<b$.

Whereas an investigation of topological groups already is one of fundamental branches of topological
algebra (see, for instance,~\cite{Pon,DikProSto} and~\cite{ArhTka}),
other topologized groups are not so well-investigated and have more variable structure.

Basic properties of semitopological or paratopological groups are described in book \cite{ArhTka}
by Arhangel'skii and Tkachenko, in author's PhD thesis~\cite{Rav3} and papers~\cite{Rav,Rav2}. New
Tkachenko's survey~\cite{Tka} presents recent advances in this area. Let $\w$ be the set of finite
ordinals and $\IN=\w\setminus\{0\}$.

A subset $A$ of a left topological group $G$ is
\begin{itemize}
\item {\it left} (resp. {\it right}) {\it precompact}, if for any neighborhood $U$ of the identity of $G$
there exists a finite subset $F$ of $G$ such that $FU\supseteq A$ (resp. $UF\supseteq A$);
\item {\it precompact}, if $A$ is both left and right precompact;
\item {\it left $\omega$-precompact}, if for any neighborhood $U$ of the identity of $G$
there exists a countable set $F\subseteq G$ such that $FU\supseteq A$;
\item {\it pseudobounded}, if for any neighborhood $U$ of the identity of $G$
there exists  $n\in\IN$ such that $U^n=G$;
\item {\it $\omega$-pseudobounded}, if for any neighborhood $U$ of the identity of $G$,
$G=\bigcup_{n\in\mathbb N} U^n$.
%\item {\it premeager}, if for any nowhere dense set $A$ of $G$, $A^n\ne G$ for each natural $n$.
%\item {\it saturated}, if for any nonempty open subset $U$ of $G$
%there exists a nonempty open subset $V$ of $G$ such that $V^{-1}\subseteq U$.
\end{itemize}

Proposition 2.1 from~\cite{Rav2} implies that a paratopological group is left precompact iff it is
right precompact, so iff it is precompact. Moreover, precompact Hausdorff topological groups are
exactly subgroups of compact Hausdorff groups~\cite{Wei}. A left topological group is called {\it
locally left} (resp. {\it $\omega$}-){\it precompact} if it has a left (resp. {\it $\omega$}-){\it
precompact} neighborhood of the identity.

The notion of $\omega$-pseudobounded paratopological groups was introduced by F.~Lin and S.~Lin in
their paper~\cite{LinLin}, generalizing a notion of Azar~\cite{Aza} provided for topological
groups. In ~\cite{LinLin} and also in a subsequent paper~\cite{LinLinSan} with S\'anchez they
investigated basic properties of these groups and asked some related questions. In this paper we
answer some of them.
%\todo[inline]{Build counterexamples of pseudobounded groups which provide negative answers
%to their questions.}

Similarly to the proof of Theorems 3 and 6 from~\cite{LinLin} we can show that
if $H$ is a normal subgroup of a topologized group $G$ then
$G$ is ($\omega$-)pseudobounded provided both groups $H$ and $G/H$ are ($\omega$-)pseudobounded.

%The group $\S/\Z$ is a first-countable pseudobounded paratopological group which is
%not metrizable (being a separable space without a countable base)
%and not a topological group, so it provides a negative answer to
%Questions 2 and 5 from~\cite{LinLin}. Remark that this group is topologically isomorphic
%to the group $(X,\cdot)$ from ~\cite[Example 2]{LinLin}.

In Problem 2.27 from~\cite{LinLinSan} is asked whether every pseudobounded (para)topological group is
($\omega$)-precompact. The following example provides its negative solution.

\begin{example}
Consider the compact topological group $\mathbb T=\{z\in\mathbb C:|z|=1\}\subset\mathbb C$ endowed
with the operation of multiplication of complex numbers. Let $G$ be $\T^\omega$ endowed with the
topology, generated by the  sup-metric $d(x,y)=\sup_{i\in\omega} |x(i)-y(i)|$ for
$x,y\in\T^\omega$. It is easy to check that $G$ is pseudobounded but not locally
$\omega$-precompact.
\end{example}

On the other hand, the following proposition provides an affirmative solution
to a special case of the above problem.
Recall that a neighborhood $U$ of an identity of a paratopological group $G$ is \emph{invariant},
if $U=g^{-1}Ug$ for each $g\in G$. A paratopological group $G$ is a \emph{SIN-group},
if it has a base at the identity consisting of invariant neighborhoods.
%It is easy to see that a SIN-group is is left $\omega$-precompact iff it is right $\omega$-precompact,
%so iff it is $\omega$-precompact.

%if any neighborhood $U$ of the identity of $G$ contains a neighborhood $V$ of the identity
%such that $gVg^{-1}\subseteq U$ for each $g\in G$. It follows that $G$ has a base at $e$
%consisting of \emph{invariant} neighborhoods $U$ such that $gUg^{-1}=U$ for each $g\in G$.

\begin{proposition} Each pseudobounded locally precompact paratopological $SIN$-group
$G$ is precompact.
\end{proposition}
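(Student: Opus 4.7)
The plan is to establish left precompactness of $G$ and then invoke the Proposition 2.1 from~\cite{Rav2} recalled above to upgrade left precompactness to precompactness. Three steps are needed.

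First I would combine local precompactness with the SIN hypothesis to obtain an \emph{invariant} precompact neighborhood of the identity $e$: pick any precompact neighborhood $V_0$, and then use SIN to select an invariant neighborhood $W\subseteq V_0$. Since a subset of a (left or right) precompact set is easily seen to be (left or right) precompact, $W$ inherits precompactness and is simultaneously invariant.

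Next, applying pseudoboundedness to $W$ yields $n\in\mathbb N$ with $W^n=G$. The remaining task is to show that this $n$-fold product of the invariant precompact set $W$ is left precompact. Fixing an arbitrary neighborhood $U$ of $e$, I would use continuity of multiplication at $(e,\dots,e)$ to find a neighborhood $V$ of $e$ with $V^n\subseteq U$, and then shrink $V$ inside an invariant neighborhood (using SIN once more) so that $V$ itself is invariant and still satisfies $V^n\subseteq U$. Precompactness of $W$ provides a finite set $F\subseteq G$ with $W\subseteq FV$. The key computation is
\[
W^n\subseteq (FV)^n=F^nV^n\subseteq F^nU,
\]
where the middle equality is obtained by repeatedly applying $Vg=gV$ for every $g\in G$ (and hence $V^kg=gV^k$ for every $k$) to slide each copy of $V$ past each copy of $F$. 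Since $F^n$ is finite, this establishes the left precompactness of $W^n=G$.

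The substantive obstacle is exactly the commutation step in the middle equality: it is here that the SIN hypothesis is indispensable, for without the ability to move small neighborhoods of $e$ past arbitrary group elements, $n$-fold products of precompact sets in a paratopological group need not remain precompact. The remaining ingredients---upgrading a precompact neighborhood to an invariant one via SIN, finding an invariant $V$ with $V^n\subseteq U$ via continuity of multiplication plus SIN, and converting left precompactness to precompactness via Proposition 2.1 of~\cite{Rav2}---are routine consequences of the hypotheses already in play.
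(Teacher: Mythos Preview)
Your proof is correct and follows essentially the same route as the paper's: both hinge on choosing, via SIN, an invariant neighborhood $V$ with $V^n\subseteq U$ and then performing the identical computation $(FV)^n=F^nV^n\subseteq F^nU$. Your version is slightly more explicit in that it separates the fixed precompact neighborhood $W$ (to which pseudoboundedness is applied once) from the arbitrary test neighborhood $U$ and spells out the final appeal to Proposition~2.1 of~\cite{Rav2}; the extra step of making $W$ itself invariant is harmless but never actually used, since only the invariance of $V$ enters the computation.
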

\begin{proof}
Let $U$ be any left precompact neighborhood of the identity $e$ of $G$. Since
the group $G$ is pseudobounded, there exists a natural number $n$ such that $G=U^n$.
Since $G$ is a SIN-group, there exists an invariant neighborhood $V$ of $e$ such that $V^n\subseteq
U$. Since the set $U$ is precompact, there exists a finite subset $F$ of $G$ such that
$FV\supseteq U$. Then $G=U^n\subseteq (FV)^n=F^nV^n\subseteq F^nU$.
\end{proof}

%Similarly we can prove the following
%\begin{proposition} Each $\omega$-pseudobounded locally $\omega$-precompact paratopological
%$SIN$-group is $\omega$-precompact.
%\end{proposition}

\begin{example} Each real or complex linear topological space is $\omega$-pseudobounded.
Since a first countable topological group (in particular, a normed space) is
$\omega$-precompact iff it is separable, each nonseparable real or complex normed topological
space is $\omega$-pseudobounded but not $\omega$-precompact. For instance,
so is the Banach space $\ell_\infty(X)$ of bounded real-valued functions
on an infinite set $X$, endowed with the supremum norm.
%Then $X$ is a Banach space
%{\bf Check the real Hilbert space $\ell^2(X)$ OPYSAT' over an uncountable set $X$}.%check
%Let $p\in [1,\infty)$ be any number
%a normed space $(\ell_p(X),\|\cdot\|)$
%of sequences $x=(x_n)_{n\in\N}$ of real numbers such that
%$\|x\|^p=\sum |x_n|^p<\infty$ for $p<\infty$ and $\|x\|=\sup |x_n|<\infty$ for
\end{example}

%---
%look to Pseudocompact3.

%It is easy to check that the group $(\Bbb T,\tau)$ is pseudobounded,
%the group $G_1$ is $\omega$-pseudobounded,
%show that the group $(G,\tau_1)$ from Example is $\omega$-pseudobounded.

%The following Example negatively answers Question

A left topological group $G$ is {\it $2$-pseudocompact}
if $\bigcap_{n\in\omega} \overline{U_n^{-1}}\not=\varnothing$ for each nonincreasing sequence
$(U_n)_{n\in\omega}$ of nonempty open subsets of $G$. By~\cite[Proposition 2.9]{LinLinSan},
each $\omega$-pseudobounded $2$-pseudocompact paratopological group is pseudobounded.
%Generalizes both
Since each $2$-pseudocompact left topological group is feebly compact and Baire
(by Proposition 3.13 and Lemma 3.7 from~\cite{BR2020}), the following proposition
generalizes this result.

\begin{proposition} Each $\omega$-pseudobounded feebly compact Baire left topological group $G$ is
pseudobounded.
\end{proposition}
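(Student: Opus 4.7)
The plan is to fix an open neighborhood $U$ of the identity $e$ and produce $n\in\mathbb{N}$ with $U^n=G$. First, since left translations are homeomorphisms of $G$, an induction on $n$ shows that each $U^n$ is open: if $U^{n-1}$ is open then $U^n=\bigcup_{u\in U}uU^{n-1}$ is a union of left translates of an open set. By $\omega$-pseudoboundedness the family $(U^n)_{n\in\mathbb{N}}$ is therefore a nondecreasing open cover of $G$, so the standard characterisation of feeble compactness (every countable open cover has a finite subfamily with dense union) gives $N\in\mathbb{N}$ such that $U^N$ is dense in $G$.

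Density of $U^N$ combined with continuity of left translations immediately yields $G=U^N U^{-1}$: for any $x\in G$ the set $xU$ is an open neighborhood of $x$ and therefore meets $U^N$, producing $u\in U$ and $v\in U^N$ with $xu=v$, whence $x=vu^{-1}\in U^N U^{-1}$. The proof then reduces to finding $k$ with $U^{-1}\subseteq U^k$, since this would give $G=U^N U^{-1}\subseteq U^{N+k}$ and hence $U^{N+k}=G$.

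This last absorption is the main obstacle and is where the Baire hypothesis must enter. Since $u^{-1}\in G=\bigcup_{m}U^m$ for every $u\in U$, we obtain the decomposition $U=\bigcup_{n\in\mathbb{N}}(U\cap U^{-n})$; because $U$ inherits the Baire property as an open subspace of the Baire space $G$, some $U\cap U^{-n_0}$ is non-meager in $U$ and hence dense in a nonempty open set $O\subseteq U$. The plan is to combine this local information with the density of $U^N$ and with the homogeneity of $G$ provided by left translations to conclude that $U^{-1}\subseteq U^k$ for a uniform $k$. In a topological group the absorption step is trivial since one can take $U$ symmetric; in a merely left topological group the set $U^{-1}$ need not be open or even a neighborhood of $e$, so the Baire property, feeble compactness, and the left action of $G$ on itself must be combined with care to yield the required uniform bound on $U^{-1}$ by a power of $U$.
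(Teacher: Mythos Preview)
Your argument stops short of a proof. You correctly identify the step $U^{-1}\subseteq U^k$ as the ``main obstacle,'' set up a Baire step producing an open $O\subseteq U$ in which $U\cap U^{-n_0}$ is dense, and then write only that ``the plan is to combine this local information\ldots'' without carrying anything out. Moreover, the reduction itself buys nothing: once $G=U^NU^{-1}$ is known, the assertions ``$U^{-1}\subseteq U^k$ for some $k$'' and ``$G=U^m$ for some $m$'' are trivially equivalent, so you have replaced the goal by an equivalent one, not an easier one. The local datum you extract (somewhere-density of $U\cap U^{-n_0}$ in $U$) does not by itself bound all of $U^{-1}$ by a fixed power of $U$, because the exponent witnessing $u^{-1}\in U^m$ may vary with $u$.

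The paper supplies exactly the missing mechanism. After the Baire step it produces a nonempty open set inside a \emph{negative} power $U^{-p}$, and then left-translates by one of its points to obtain an open neighborhood $W$ of $e$ with $W\subseteq U^{-q}$ for a fixed $q$. Now $\omega$-pseudoboundedness is applied to $W$, giving $G=\bigcup_l W^l$ with each $W^l\subseteq U^{-lq}$ open. Feeble compactness is then used a \emph{second} time, on the decreasing sequence of open sets $G\setminus\overline{U^{-k}}$ (nonempty under the contradiction hypothesis that $U^k\ne G$ for all $k$, since $\overline{U^{-k}}\subseteq U^{-k-1}$): an accumulation point $x$ of this sequence lies in some $W^l\subseteq\overline{U^{-lq}}$, and $W^l$ is then an open neighborhood of $x$ missing $G\setminus\overline{U^{-lq}}$, a contradiction. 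Your outline contains neither the translation step producing $W\subseteq U^{-q}$ nor the second application of feeble compactness, and without these the argument cannot close.
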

\begin{proof} Let $U$ be any neighborhood of the identity of $G$. Since the group $G$ is
$\omega$-pseu\-do\-bo\-un\-ded, $G=\bigcup_{n\in\N} U^{-n}$. Since the space $G$ is Baire, there
exists $n\in\N$ such that $\overline{U^{-n}}$ contains a nonempty open set $V$. Then
$U^{-n-1}=U^{-n}U^{-1}\supseteq\overline{U^{-n}}\supseteq V$. Pick any point $y\in V$. Since
$G=\bigcup_{n\in\N} U^{n}$, there exists $m\in\N$ such that $y\in U^{m-1}$. Then
$W:=y^{-1}V\subseteq U^{-m+1}U^{-n-1}=U^{-m-n}$. Suppose for a contradiction that $G\ne U^k$ for
every $k\in\IN$. Taking into account that $\overline{U^{-k}}\subseteq U^{-k}U^{-1}$, we conclude
that $\overline{U^{-k}}\ne G$ for every $k\in\IN$. Since the group $G$ is feebly compact, there
exists a point $x\in \bigcap_{k\in\N}\overline{G\setminus \overline{U^{-k}}}$. Since the group $G$
is $\omega$-pseudobounded, there exists $l\in\IN$ such that $x\in W^l\subseteq U^{-l(m+n)}$. But
then $W^l$ is a neighborhood of $x$, disjoint from $G\setminus \overline{U^{-l(m+n)}}$, a
contradiction.
%$W^l\cap (G\setminus \overline{U^{-k}})\ne\varnothing$ for each $k$.
\end{proof}

The following proposition answers Question 6 from~\cite{LinLin}.

\begin{proposition} Let $G$ be a pseudobounded left topological group and $d$ be any
left-invariant quasi-pseudometric generating the topology of $G$. Then $d$ is bounded on $G$.
\end{proposition}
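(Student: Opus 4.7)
The plan is to reduce the boundedness of $d$ on all of $G\times G$ to bounding $d(e,\cdot)$ on $G$, then to use pseudoboundedness to cover $G$ by a finite power of a small $d$-ball.

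First I would note that by left-invariance, for any $x,y\in G$ we have $d(x,y)=d(e,x^{-1}y)$, so it suffices to show that the function $z\mapsto d(e,z)$ is bounded on $G$. To that end I would take $U:=\{z\in G:d(e,z)<1\}$. Since $d$ generates the topology of $G$, the set $U$ is an open neighborhood of $e$. By the pseudoboundedness of $G$, there exists $n\in\IN$ with $U^n=G$.

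Now for any $z\in G$ I would write $z=u_1u_2\cdots u_n$ with each $u_k\in U$, hence $d(e,u_k)<1$. Applying the triangle inequality along the telescoping chain $e,\,u_1,\,u_1u_2,\,\ldots,\,u_1\cdots u_n=z$ and then using left-invariance (multiplying the $k$-th pair on the left by $(u_1\cdots u_{k-1})^{-1}$) gives
\[
d(e,z)\le\sum_{k=1}^n d(u_1\cdots u_{k-1},u_1\cdots u_k)=\sum_{k=1}^n d(e,u_k)<n.
\]
Combining with the first step, $d(x,y)<n$ for all $x,y\in G$.

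There is no real obstacle here: the argument uses only the triangle inequality and left-invariance, and in particular does not require $d$ to be symmetric, so the quasi-pseudometric setting gives no extra difficulty. The only thing worth being careful about is that the $d$-balls are automatically open in the topology generated by $d$, so $U$ genuinely qualifies as a neighborhood of $e$ to which the pseudoboundedness hypothesis applies.
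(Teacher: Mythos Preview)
Your proof is correct and is essentially identical to the paper's: both take $U=\{z:d(e,z)<1\}$, invoke pseudoboundedness to get $U^n=G$, and then bound $d(e,z)$ by telescoping along $e,u_1,u_1u_2,\ldots$ using the triangle inequality together with left-invariance. Your added remarks that $U$ is open because $d$ generates the topology, and that symmetry of $d$ is never used, are accurate and make the argument slightly more explicit than the paper's version.
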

\begin{proof}
Since $G$ is pseudobounded, for the neighborhood $U=\{x\in G:d(e,x)<1\}$ of the identity $e$ in
$G$, there exists a  number $n\in\IN$ such that $G=U^n$. Now let $y,z$ be any elements of $G$.
There exist elements $x_1,\dots, x_n\in G$ such that $y^{-1}z=x_1\cdots x_n$. Then
$$d(y,z)=d(e,y^{-1}z)=d(e, x_1\cdots x_n)\le$$ $$d(e, x_1)+d(x_1,x_1x_2)+\dots +d(x_1\cdots
x_{n-1},x_1\cdots x_{n})=$$ $$d(e, x_1)+d(e,x_2)+\dots +d(e,x_{n})<n.$$
\end{proof}

%\section{($omega$-)pseudobounded paratopological groups}

%\section{Premeager paratopological groups}

Following F.~Lin and S.~Lin~\cite{LinLin}, we call a
left topological group $G$ \emph{premeager}, if $G\ne N^n$ for any nowhere dense subset $N$ of
$G$ and any $n\in\IN$.

A \emph{Lusin space} is an uncountable crowded $T_1$ space containing no uncountable nowhere dense subsets.
A space $X$ is {\em crowded} if every nonempty open set in $X$ is infinite. Clearly,
each Lusin left topological group is premeger. A trivial example of a Lusin space is any
uncountable space $X$ endowed with the $T_1$-topology $\{\emptyset\}\cup\{X\setminus A:A$ is
finite$\}$. On the other hand, the existence of a Hausdorff Lusin space is independent of the
axioms of ZFC: Lusin~\cite{Lus} showed that such a space exists under Continuum Hypothesis, and
Kunen~\cite{Kun} showed that there are no Hausdorff Lusin spaces under Martin's Axiom and the negation of
Continuum Hypothesis.

%It is easy to see that each such space $X$ has countable both cellularity and extent.
%What I have done: First, I found this theorem in K. Kunnen's paper "Luzin spaces"~\cite{Kun}
%Every Luzin (T3) space is zero-dimensional (in the paper, a Luzin space is a
%T2 topological space such that every nowhere dense subset is countable).
%every Luzin space is hereditarily Lindelof,}

In order to construct a nonpremeager paratopological group we introduce the
following definition.
A subset of a space $X$ is \emph{meager}, if it is contained in a countable
union of nowhere dense subsets of $X$. By $\M_X$ we denote the family of
all meager subsets of $X$. Let
\begin{itemize}
\item $\cov(\M_X)=\min\{|\A|: \A\subseteq \M_X \;\; (\bigcup\A=X) \}$ and
\item $\cof(\M_X)=\min\{|\A|: \A\subseteq \M_X \;\; \forall B\in\M_X\; \exists A\in \A\; (B\subseteq A)\}$.
\end{itemize}
The family $\A$ in the latter definition is called a~\emph{cofinal} in $\M_X$.
The family $\M_\R$ will be denoted by $\M$.
A space is called \emph{Polish}, if it is homeomorphic to a separable complete metric space.
By Theorem 15.6 in~\cite{Kech}, for any crowded Polish space $X$  we have
$\cof(\M_X)=\cof(\M)$ and $\cof(\M_X)=\cof(\M)$.

A left topological group $G$ \emph{meagerly divisible} if for every meager subset
$M$ of $G$ and every nonzero integer number $n$, the set $\{x\in G:x^n\in M\}$ is meager.

\begin{proposition}\label{pOpenMD} A paratopological group $G$ is meagerly divisible provided
for each positive integer $n$, the power map $p_n:G\to G$, $x\mapsto x^n$, is open.
\end{proposition}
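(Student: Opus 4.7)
The plan is to show that, under the given openness hypothesis, for each positive integer $n$ the preimage map $p_n^{-1}$ sends meager sets to meager sets. Since $\{x\in G:x^n\in M\}=p_n^{-1}(M)$, and since every meager set is a countable union of nowhere dense sets while $p_n^{-1}$ commutes with unions, the claim reduces at once to the following key step: if $N\subseteq G$ is nowhere dense, then so is $p_n^{-1}(N)$.

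For that key step, I would use that multiplication in $G$ is jointly continuous (paratopological group), whence each $p_n$ is continuous. Continuity gives $\overline{p_n^{-1}(N)}\subseteq p_n^{-1}(\overline{N})$. If this closure had nonempty interior, pick a nonempty open $U$ inside it; then $p_n(U)\subseteq\overline{N}$. But openness of $p_n$ makes $p_n(U)$ a nonempty open subset of $\overline{N}$, contradicting the nowhere-density of $N$.

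For a negative integer $n=-k$ with $k>0$, the substitution $y=x^{-1}$ yields the identity $\{x\in G:x^n\in M\}=(p_k^{-1}(M))^{-1}$, so the statement reduces to showing that inversion sends meager sets to meager sets. This is the step I expect to be the main additional obstacle, since continuity of inversion is not part of the paratopological hypothesis; a separate argument (or a reading of the definition in which $n$ effectively ranges over positive integers) would be needed here. In any case, the positive case above is the main technical content and follows cleanly from continuity of multiplication combined with the assumed openness of $p_n$.
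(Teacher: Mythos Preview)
Your treatment of the positive-$n$ case is essentially the paper's: continuity of $p_n$ (from joint continuity of multiplication) plus the assumed openness shows that $p_n^{-1}$ preserves nowhere density, and one then passes to countable unions. The paper phrases it with closed nowhere dense sets while you take closures first, but the content is identical.

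Your caution about negative $n$ is exactly on point. The paper handles that case in its opening sentence: ``Since $G$ is a topological group, for every $n\in\mathbb N$ the openness of the map $p_n$ implies the openness of the map $p_{-n}$.'' Note that the paper writes \emph{topological} here, not paratopological: for a topological group the claim is immediate since $p_{-n}=\mathrm{inv}\circ p_n$ and inversion is a homeomorphism, so $p_{-n}$ is open and the same nowhere-dense argument applies. In the stated paratopological generality this step is unjustified (inversion need not be continuous, and meagerness need not be preserved under $A\mapsto A^{-1}$), and indeed every later application of the proposition in the paper is to genuine topological groups ($\mathbb R$, $\mathbb R/\mathbb Z$, divisible Abelian Polish groups; the Sorgenfrey cases are handled separately via a common $\pi$-base). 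So the gap you flag is real at the paratopological level; the paper's argument is complete only if one reads the hypothesis as ``topological group,'' and under that reading your reduction $\{x:x^{-k}\in M\}=(p_k^{-1}(M))^{-1}$ finishes at once since inversion, being a homeomorphism, preserves meagerness.
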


\begin{proof}
Since $G$ is a topological group, for every $n\in\IN$ the openness of the map $p_n:G\to G$ implies
the openness of the map $p_{-n}:G\to G$, $p_{-n}:x\mapsto x^{-n}$. Let $N$ be any closed nowhere
dense subset of $G$ and $n$ be any nonzero integer. Since the map $p_n$ is continuous, the
preimage $p_n^{-1}(N)$ is closed. If $p_n^{-1}(N)$ contains a nonempty open subset $U$ of $G$ then
$p_n(U)$ is a nonempty open subset of $N$, that is impossible.

Let $M$ be any meager subset of $G$. There exist a countable family $\mathcal A$ of nowhere
dense closed subsets of $G$ such that $M\subset\bigcup\mathcal A$. Then $p_n^{-1}(M)\subseteq\bigcup
\{p_n^{-1}(N):N\in\mathcal A\}$ and each set $p_n^{-1}(N)$ is nowhere dense.
\end{proof}

\begin{corollary}\label{cRMD} The paratopological groups $\R$, $\S$, $\R/\Z$, and $\S/\Z$
are meagerly divisible.
\end{corollary}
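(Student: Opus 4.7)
The plan is to apply Proposition~\ref{pOpenMD} to each of the four groups, which reduces the task to verifying that for every positive integer $n$ the power map $p_n\colon x\mapsto nx$ is open. For $\R$, $p_n$ is a homeomorphism with continuous inverse $x\mapsto x/n$, so the claim is immediate. For the Sorgenfrey line $\S$, the image of a basic open set is again basic open: $p_n([a,b))=[na,nb)$ for $n\ge 1$, so $p_n$ is open.

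For the quotient groups $\R/\Z$ and $\S/\Z$, I would factor through the quotient homomorphism $q\colon G\to G/\Z$ with $G\in\{\R,\S\}$. The map $q$ is open, because for any open $V\subseteq G$ the set $q^{-1}(q(V))=\bigcup_{k\in\Z}(V+k)$ is open in $G$ (integer translations being homeomorphisms), and the quotient topology then makes $q(V)$ open. From the identity $p_n\circ q=q\circ p_n$ and the openness of $p_n$ on $G$ (established in the previous paragraph) one deduces openness of $p_n$ on $G/\Z$: every open $U\subseteq G/\Z$ has the form $q(V)$ for some open $V\subseteq G$, and $p_n(U)=p_n(q(V))=q(p_n(V))$ is open.

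I do not anticipate any serious obstacle. The only minor point is that the property of being meagerly divisible quantifies over all nonzero integers, whereas the proof of Proposition~\ref{pOpenMD} handles negative $n$ by routing through the inversion map, which is continuous on $\R$ and $\R/\Z$ but not on $\S$ or $\S/\Z$. This can be circumvented by observing that the meager $\sigma$-ideals of $\S$ and $\R$ (respectively $\S/\Z$ and $\R/\Z$) coincide: any inclusion $[c,d)\subseteq\overline{N}^{\S}$ forces $(c,d)\subseteq\overline{N}^{\R}$ because $\overline{N}^{\S}\subseteq\overline{N}^{\R}$, and conversely if $\overline{N}^{\R}$ contains an interval $(a,b)$ then $N$ is $\R$-dense in $(a,b)$ and hence also $\S$-dense in $(a,b)$, so $\overline{N}^{\S}\supseteq(a,b)$. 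Consequently $x\mapsto -x$, a homeomorphism of $\R$, preserves the meager ideal of $\S$, and the negative-$n$ clause of meager divisibility reduces to the positive-$n$ one via $\{x:nx\in M\}=-\{y:(-n)y\in M\}$.
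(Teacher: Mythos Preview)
Your argument is correct. For $\R$ and $\R/\Z$ you do exactly what the paper does: invoke Proposition~\ref{pOpenMD} after checking openness of the power maps (your quotient argument for $\R/\Z$ is a clean way to do this). For $\S$ and $\S/\Z$ the paper takes a shorter path: it observes that $\S$ (resp.\ $\S/\Z$) shares a common $\pi$-base with $\R$ (resp.\ $\R/\Z$), hence $\mathcal M_\S=\mathcal M_\R$, and since meager divisibility depends only on the group structure together with the meager $\sigma$-ideal, it transfers immediately from $\R$ to $\S$. You eventually prove the same equality $\mathcal M_\S=\mathcal M_\R$ in your last paragraph to handle negative $n$; once you have it, the separate verification that $p_n$ is open on $\S$ for positive $n$ becomes redundant, since the entire statement for $\S$ already follows from the case of $\R$. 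So your route is a correct but slightly longer detour that rejoins the paper's argument at the end.

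One remark on exposition: the implication ``$\R$-dense in $(a,b)$ hence $\S$-dense in $(a,b)$'' is not a general fact about comparable topologies (the Sorgenfrey topology is finer, so closures shrink), and what makes it true here is precisely that every nonempty $\S$-open set contains a nonempty $\R$-open set. That is exactly the common $\pi$-base property the paper invokes, so you might as well phrase it that way.
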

\begin{proof}
The topological group $\R$ and $\R/\Z$ are meagerly divisible by Proposition~\ref{pOpenMD}. The
Sorgenfrey line $\S$ (resp. $\mathbb S/\Z$) has a common $\pi$-base with the topological group $\R$
(resp. $\R/\Z$), so $\M_{\S}$ (resp. $\M_{\S/\Z}$) equals $\M$ and the group $\S$ (resp. $\S/\Z$)
is meagerly divisible.
\end{proof}

We recall that a space is \emph{analytic} if it is a continuous image of a Polish space.
The Open Mapping Principle (see, for instance, Corollary 3.10 in~\cite{BGJS})
states that any continuous surjective homomorphism from
an analytic topological group to a Polish topological group is open.

\begin{proposition} Each divisible Abelian Polish topological group $G$ is meagerly divisible.
\end{proposition}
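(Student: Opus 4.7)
The plan is to reduce everything to Proposition~\ref{pOpenMD}, so the task is to verify that for every positive integer $n$ the power map $p_n\colon G\to G$, $x\mapsto x^n$, is open. Once this is established, Proposition~\ref{pOpenMD} immediately yields that $G$ is meagerly divisible.

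First I would observe that because $G$ is Abelian, the map $p_n$ is a group homomorphism: $(xy)^n = x^n y^n$. Continuity of $p_n$ follows from continuity of the group operation. Divisibility of $G$ means precisely that $p_n$ is surjective for every $n\in\IN$. Thus $p_n\colon G\to G$ is a continuous surjective homomorphism between Polish topological groups (and the domain, being Polish, is in particular analytic).

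At this point the Open Mapping Principle cited just above the proposition applies directly: any continuous surjective homomorphism from an analytic topological group onto a Polish topological group is open. Hence $p_n$ is open for each $n\in\IN$. By Proposition~\ref{pOpenMD}, $G$ is meagerly divisible.

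There is no real obstacle in this argument; the mild points to be careful about are (i) that we need an Abelian hypothesis for $p_n$ to be a homomorphism (the statement assumes this), and (ii) that Proposition~\ref{pOpenMD} is stated only for positive $n$, but its proof already observes that openness of $p_n$ implies openness of $p_{-n}$ in a topological group, so the definition of meagerly divisible (which allows all nonzero integers $n$) is covered.
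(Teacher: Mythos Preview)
Your argument is correct and is essentially identical to the paper's own proof: both show that $p_n$ is a continuous surjective homomorphism (using the Abelian and divisible hypotheses), invoke the Open Mapping Principle to conclude $p_n$ is open, and then apply Proposition~\ref{pOpenMD}. Your additional remarks about continuity and about the positive-versus-nonzero $n$ issue are accurate and simply make explicit what the paper leaves implicit.
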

\begin{proof}
Let $n$ be any nonzero integer and $p_n:G\to G$, $x\mapsto nx$, be the power map.
We claim the the map $p_n$ is open. Indeed, since $G$ is Abelian, $p_n$ is a homomorphism.
Since $G$ is divisible, $p_n$ is surjective. By the Open Mapping Principle,
$p_n$ is open. By Proposition~\ref{pOpenMD}, $G$ is meagerly divisible.
\end{proof}

\begin{proposition} If an Abelian Polish topological group $G$ is meagerly divisible, then
for each nonzero integer $n$, the power map $p_n:G\to G$, $x\mapsto nx$, is open.
\end{proposition}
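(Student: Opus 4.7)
The plan is to deduce openness of $p_n$ from the fact that its image $p_n(G)$ is an open subgroup of $G$, and then apply the Open Mapping Principle to the corestriction $p_n:G\to p_n(G)$.

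\medskip

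\noindent\textbf{Step 1.} First I would show that $p_n(G)$ is non-meager in $G$. Indeed, if $p_n(G)$ were meager, then by meager divisibility the preimage $p_n^{-1}(p_n(G))$ would also be meager; but this preimage equals $G$, contradicting the Baire category theorem for the Polish space $G$.

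\medskip

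\noindent\textbf{Step 2.} Next I would argue that $p_n(G)$ is open. Since $G$ is Polish, the continuous image $p_n(G)$ is an analytic subset of $G$, hence has the Baire property. By the classical Pettis theorem, any non-meager Baire-measurable subset $A$ of a topological group satisfies $A-A\supseteq U$ for some neighborhood $U$ of $0$. Because $G$ is Abelian and $p_n$ is a homomorphism, $p_n(G)$ is a subgroup, so $p_n(G)-p_n(G)=p_n(G)$. Thus $p_n(G)$ contains a neighborhood of $0$, and therefore is an open (hence also closed) subgroup of $G$.

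\medskip

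\noindent\textbf{Step 3.} Being an open subgroup of a Polish group, $p_n(G)$ is itself a Polish topological group. The corestriction $p_n:G\to p_n(G)$ is a continuous surjective homomorphism from an analytic group (since $G$ is Polish) onto a Polish group, so by the Open Mapping Principle quoted above it is open. Since $p_n(G)$ is open in $G$, openness into $p_n(G)$ is equivalent to openness into $G$, and we conclude that $p_n:G\to G$ is open.

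\medskip

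The main obstacle I expect is Step 2: one must know that analytic sets in Polish spaces have the Baire property and invoke Pettis's theorem to convert the non-meagerness of the subgroup $p_n(G)$ into its openness. Everything else is bookkeeping with the Open Mapping Principle and the fact that open subgroups of Polish groups are Polish.
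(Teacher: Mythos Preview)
Your proof is correct, but it follows a different route than the paper's. The paper works locally: given a neighborhood $U$ of $0$, it picks $V$ with $V-V\subseteq U$, observes that $V$ is nonmeager (since $G$ is Baire), uses meager divisibility to conclude that $p_n(V)$ is nonmeager, notes that $p_n(V)$ is analytic, and then applies Piccard--Pettis directly to $p_n(V)$ to deduce that $p_n(V)-p_n(V)=p_n(V-V)\subseteq p_n(U)$ is a neighborhood of $0$. You instead apply the Pettis argument once, globally, to show that the subgroup $p_n(G)$ is open, and then hand the rest off to the Open Mapping Principle for the corestriction $p_n:G\to p_n(G)$. Your approach is conceptually tidy and exploits a tool already quoted in the paper, but it invokes a heavier black box; the paper's argument is more self-contained, since the Open Mapping Principle is itself typically proved by a Pettis-style argument applied neighborhood by neighborhood---essentially what the paper carries out directly.
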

\begin{proof} Let $n$ be any nonzero integer. Since $p_n$ is a homomorphism, it suffices to
show that for each neighborhood $U$ of the identity, $p_n(U)$ is a neighborhood of the identity.
Pick an open neighborhood $V$ of the identity such that $V-V\subseteq U$.
Since the group $G$ is Polish, the set $V$ is nonmeager.
The group $G$ is meagerly divisible, the set $p_n(V)$ is nonmeager.
Since the map $p_n$ is continuous, the space $p_n(V)$ is analytic.
Piccard-Pettis' Theorem implies (see, for instance, Corollary 3.9 in~\cite{BGJS})
that $p_n(V)-p_n(V)$ is a neighborhood of the identity of $G$.
It remains to note that $p_n(U)\supseteq p_n(V)-p_n(V)$.
\end{proof}

\begin{proposition}\label{pPremExists} Let $H$ be a meagerly divisible
Abelian semitopological group such that $\cov(\mathcal{M}_H)=\cof(\mathcal{M}_H)=\kappa>\omega$.
Then $H$ contains a free group $G$ of size $\kappa$ such that every meager subset $M\subseteq G$
of $H$ has cardinality less than $\kappa$ and hence does not generate $G$. If $H$ is $T_1$ (and
$\kappa=\w_1$), then $G$ is crowded (and Lusin).
\end{proposition}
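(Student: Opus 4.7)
The plan is a transfinite recursion of length $\kappa$ building a $\Z$-linearly independent sequence $(x_\alpha)_{\alpha<\kappa}\subseteq H$ so that $G:=\langle x_\alpha:\alpha<\kappa\rangle$ is free abelian of size $\kappa$ and intersects every meager $M\subseteq H$ in fewer than $\kappa$ points. Using $\cof(\M_H)=\kappa$, I first fix a cofinal family $\{M_\alpha:\alpha<\kappa\}\subseteq\M_H$.

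At stage $\alpha<\kappa$, with $x_\beta$ already chosen for $\beta<\alpha$, let $G_\alpha=\langle x_\beta:\beta<\alpha\rangle$; inductively $G_\alpha$ is free abelian of rank $|\alpha|$, so $|G_\alpha|\le\max(|\alpha|,\omega)<\kappa$. I will pick $x_\alpha$ in the complement of
\[
F_\alpha\;:=\;\bigcup_{\beta\le\alpha}\;\bigcup_{n\in\Z\setminus\{0\}}\;\bigcup_{g\in G_\alpha}\,p_n^{-1}\!\bigl(M_\beta g^{-1}\cup\{g^{-1}\}\bigr).
\]
Each translate $M_\beta g^{-1}$ is meager since translations in the semitopological group $H$ are homeomorphisms, and by meager divisibility $p_n^{-1}(M_\beta g^{-1})$ is then meager. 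The singletons $\{g^{-1}\}$ are also meager: $\cov(\M_H)>\omega$ rules out isolated points in $H$ (else $H$ would be discrete with $\M_H=\{\varnothing\}$, incompatible with $\cov(\M_H)=\kappa$), so every singleton is nowhere dense, and meager divisibility again makes $p_n^{-1}(\{g^{-1}\})$ meager. Thus $F_\alpha$ is a union of at most $(|\alpha|+1)\cdot\omega\cdot|G_\alpha|<\kappa$ meager sets, and $\cov(\M_H)=\kappa$ gives $F_\alpha\ne H$; pick any $x_\alpha\in H\setminus F_\alpha$. The clause $x_\alpha\notin p_n^{-1}(\{g^{-1}\})$ forces $x_\alpha^n g\ne e$ for all $n\ne 0$ and $g\in G_\alpha$, i.e.\ $\Z$-linear independence of $x_\alpha$ from $G_\alpha$, so the recursion proceeds and $G=\bigcup_{\alpha<\kappa}G_\alpha$ is free abelian of size $\kappa$.

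For the Luzin-type property, pick any meager $M\subseteq H$ and $\gamma<\kappa$ with $M\subseteq M_\gamma$. For $h\in M\cap G$ write $h$ in normal form $h=x_{\alpha_1}^{n_1}\cdots x_{\alpha_k}^{n_k}$ with $n_i\ne 0$ and $\alpha_1<\cdots<\alpha_k$, and set $\alpha^*=\alpha_k$. If $\alpha^*\ge\gamma$, then collecting the earlier factors into $g:=x_{\alpha_1}^{n_1}\cdots x_{\alpha_{k-1}}^{n_{k-1}}\in G_{\alpha^*}$ (using commutativity) gives $h=x_{\alpha^*}^{n_k}g$, but the clause at stage $\alpha^*$ with $\beta=\gamma\le\alpha^*$ forbade $x_{\alpha^*}^{n_k}g\in M_\gamma$, contradicting $h\in M_\gamma$. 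Hence $\alpha^*<\gamma$, so $h\in G_\gamma$ and $|M\cap G|\le|G_\gamma|<\kappa$. Since $|G|=\kappa>\omega$, any generating set of $G$ has cardinality $\kappa$, so no meager subset of $G$ generates $G$.

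For the final assertion, assume $H$ is $T_1$ and $\kappa=\omega_1$; then $G$ is $T_1$ and uncountable. I expect to obtain crowdedness by strengthening the recursion to force $G$ to meet every element of some $\pi$-base of $H$ (feasible when such a $\pi$-base has size $\le\kappa$, e.g.\ when $H$ is second countable). Density of $G$ in $H$ then precludes $G$ from being discrete: an open $V\ni e$ with $V\cap G=\{e\}$ would, by density, satisfy $V\subseteq\overline{\{e\}}=\{e\}$ in the $T_1$ case, forcing $\{e\}$ open in $H$---contradicting non-discreteness. For the Lusin conclusion, a nowhere dense subset $N$ of $G$ (in the subspace topology) has its $G$-closure with empty $G$-interior; density of $G$ propagates this to an empty $H$-interior of its $H$-closure, so $N$ is meager in $H$ and hence countable by the Luzin property, whence $G$ contains no uncountable nowhere dense subset and is a Lusin space. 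The main obstacle is arranging density of $G$ cleanly from the stated hypotheses, which essentially demands a $\pi$-base of $H$ of size $\le\kappa$ to be interleaved with the Luzin avoidance in the recursion.
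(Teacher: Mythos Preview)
Your transfinite construction of $G$ and the verification that every meager-in-$H$ subset of $G$ has size $<\kappa$ is essentially the paper's argument; the only cosmetic difference is that the paper arranges $e\in S_\alpha$ so that $G_\beta\subseteq S_\alpha+G_\beta$, which absorbs your separate avoidance of the singletons $\{g^{-1}\}$ into the same clause.

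The genuine gap is in the final paragraph. You try to obtain crowdedness and the Lusin property by forcing $G$ to be dense in $H$, interleaving a $\pi$-base into the recursion; as you yourself note, this demands a $\pi$-base of size $\le\kappa$, a hypothesis the proposition does not provide. The paper avoids density entirely and argues directly from the property you have already established. For crowdedness: if $G$ had an isolated point it would be discrete by homogeneity; but a discrete subgroup of a non-discrete $T_1$ semitopological group is nowhere dense in $H$ (if $\overline G$ contained a nonempty open set one could translate to get an open $W\ni e$ with $W\subseteq\overline G$ and $W\cap G=\{e\}$, and then the nonempty open set $W\setminus\{e\}$ would miss $G$, contradicting $W\subseteq\overline G$). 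Hence $G$ would be meager in $H$ and so $|G|<\kappa$, absurd.

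For the Lusin clause you likewise do not need density: it is a general fact that if $A\subseteq G\subseteq H$ is nowhere dense in the subspace $G$, then $A$ is nowhere dense in $H$ (given nonempty open $V\subseteq H$, either $V\cap G=\varnothing$ and $V$ already misses $A$, or one refines $V\cap G$ in $G$ to a nonempty relatively open set missing $A$ and pulls it back to an open subset of $V$). Thus any nowhere-dense-in-$G$ set is meager in $H$, hence has size $<\kappa=\omega_1$, i.e.\ is countable, and $G$ is Lusin. So drop the density programme; the conclusion follows immediately from what you already proved.
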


\begin{proof}
Let $\{S_\alpha:\alpha<\kappa\}$ be a cofinal family in
$\mathcal{M}_H$ such that $S_\alpha\ni e$ for each $\alpha$.
Since the group $H$ is meagerly divisible, it contains a nonperiodic element $x_0$.
For each nonzero integer $n$ let $p_n:H\to H$, $x\mapsto nx$, be the power map.
Since $\kappa=\cov(\mathcal{M}_H)$ and $H$ is meagerly divisible,
we can inductively choose for each nonzero ordinal $\beta<\kappa$ a point
$$x_\beta\in H\setminus
\bigcup\{p_n^{-1}(S_\alpha+G_\beta):n\in\mathbb Z\setminus\{0\},\, \alpha<\beta\},$$
where $G_\gamma=\langle\{x_\alpha:\alpha<\gamma\}\rangle$ for each $\gamma\le\kappa$.
Put $G=G_\kappa$.

We claim that $G$ is a free Abelian group over the set $\{x_\alpha:\alpha<\kappa\}$. Indeed,
suppose for a contradiction that there exist a finite subset $F$ of $\kappa$ and a map
$f:F\to\mathbb Z\setminus \{0\}$ such that $\sum_{\alpha\in F} f(\alpha)x_\alpha=0$. Put
$\gamma=\max F$. Then $x_\gamma\in p_{f(\gamma)}^{-1}(G_\gamma)$, a contradiction.

%Let $M$ be a meager subset of the space $G$. Then $M$ is meager
%in the space $H$ too, so it is contained in a set $S_\beta$ for some ordinal

Let $M\subseteq G$ be a meager subset of the space $H$.  Then $M$
is contained in the set $S_\beta$ for some ordinal
$\beta<\kappa$. For every $x\in M\subseteq G$, there exist a finite
subset $F$ of $\kappa$ and a function $f:F\to\mathbb Z\setminus \{0\}$ such that $x=\sum_{\alpha\in F}
f(\alpha)x_\alpha$. Put $\gamma=\max F$. Then
$x_\gamma\in p_{f(\gamma)}^{-1}(S_\beta+G_\gamma)$ and hence $\gamma\le\beta$,
which implies that $x\in G_{\beta+1}$. Thus
$M\subseteq S_\beta \cap G\subseteq G_{\beta+1}$ and $|M|\le|G_{\beta+1}|<\kappa$.

If $H$ is a $T_1$ space,  then $H$ is not discrete (otherwise $\mathcal M_H=\emptyset$) and
$\cof(\mathcal M_H)=1$ and the group $G$ is crowded (otherwise $G$ is discrete, meager in $H$ and
hence $|G|<\kappa=|G|$).
\end{proof}

\begin{example}\label{ePrem} The assumption
$\cov(\mathcal{M})=\cof(\mathcal{M})=\kappa>\omega$ is consistent (for instance, $MA_{countable}$
implies $\cov(\mathcal{M})=\cof(\mathcal{M})=\mathfrak c$). Since, by Corollary~\ref{cRMD}, the
topological group $\R$ is meagerly divisible, by Proposition~\ref{pPremExists}, it contains a
premeager crowded group $G$ of size $\kappa$.
\end{example}

\begin{corollary} The existence of a Lusin Hausdorff  (para)topological group is independent
on ZFC.
\end{corollary}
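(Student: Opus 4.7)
The plan is to establish both halves of the independence claim by combining the construction machinery assembled above with the Kunen theorem cited in the introduction. Under the Continuum Hypothesis I will produce a Hausdorff Lusin topological group inside $\R$, and under Martin's Axiom together with the negation of CH I will invoke Kunen to rule out all such groups.

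For the existence half, first I would observe that under CH the chain $\omega_1 \leq \cov(\M) \leq \cof(\M) \leq \mathfrak c = \omega_1$ collapses, forcing $\cov(\M) = \cof(\M) = \omega_1$. Next, by Corollary~\ref{cRMD} the additive group $\R$ is a meagerly divisible abelian (semi)topological group, so Proposition~\ref{pPremExists} applies with $H = \R$ and $\kappa = \omega_1$. Since $\R$ is $T_1$ and $\kappa = \omega_1$, the proposition supplies a subgroup $G \leq \R$ of cardinality $\omega_1$ that is both crowded and Lusin. Endowed with the subspace topology from the Hausdorff topological group $\R$, the group $G$ is itself a Hausdorff topological group, hence in particular a Hausdorff paratopological group. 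This shows the existence of a Lusin Hausdorff (para)topological group is consistent with ZFC.

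For the non-existence half, I would assume MA together with $\neg$CH. By Kunen's theorem recalled in the introduction, this axiomatic hypothesis already precludes the existence of any Hausdorff Lusin space whatsoever, so a fortiori no Hausdorff Lusin paratopological group can exist in such a model. Combined with the previous paragraph, this witnesses that neither existence nor non-existence of a Lusin Hausdorff (para)topological group is decided by ZFC.

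The main (and essentially only) thing to check is that the subgroup $G \leq \R$ furnished by Proposition~\ref{pPremExists} genuinely inherits the structure of a Hausdorff topological group from $\R$; all the substantive work, namely meager divisibility of $\R$, the transfinite inductive construction of $G$, and the verification that $G$ is Lusin, has already been carried out in Corollary~\ref{cRMD} and Proposition~\ref{pPremExists}, so no further obstacle arises.
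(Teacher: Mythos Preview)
Your proof is correct and follows essentially the same approach as the paper: Corollary~\ref{cRMD} plus Proposition~\ref{pPremExists} for existence, Kunen's theorem for non-existence. The only cosmetic difference is that the paper works under the weaker hypothesis $\cof(\M)=\omega_1$ rather than CH; since $\omega_1\le\cov(\M)\le\cof(\M)$ always holds, $\cof(\M)=\omega_1$ already forces $\cov(\M)=\cof(\M)=\omega_1$, so full CH is not needed.
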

\begin{proof}
By Corollary~\ref{cRMD}, the topological group $\R$ is meagerly divisible.
By Proposition~\ref{pPremExists}, under the consistent assumption $\cof(\mathcal{M})=\omega_1$,
$\R$ contains a Lusin group $G$ of size $\omega_1$.

On the other hand, by \cite{Kun}, under Martin's Axiom and the negation of Continuum Hypothesis,
there are no Hausdorff Lusin spaces.
\end{proof}
%checked consistence of MA+ non-CH

It turns out that it is independent on ZFC
whether each pseudobounded Lusin Hausdorff paratopological group is a topological group, see
Question 4 from~\cite{LinLin}.
Indeed, under Martin's Axiom and the negation of Continuum Hypothesis, each
Lusin paratopological group has an isolated point and so it is a topological group.
On the other hand, under $\cof(\mathcal{M})=\omega_1$ we have the following
example (which is also a counterexample for Questions 1 and 3 from~\cite{LinLin}).

\begin{example} Assume $\cof(\mathcal{M})=\omega_1$.
Since, by Corollary~\ref{cRMD}, the pa\-ra\-to\-po\-lo\-gi\-cal group $\S/\Z$ is meagerly divisible,
by Proposition~\ref{pPremExists}, it contains a Lusin free group $G$ of size $\omega_1$.
It follows that $G$ is nondiscrete and so not a topological group.

We claim that that the group $G$ is pseudobounded. Indeed, let $U$ be any open neighborhood
of the identity of the group $G$. Let $\tau$ be a topology on $G$ inherited from $\R/\Z$.
Since the group $\R/\Z$ is compact, the group $(G,\tau)$ is precompact, see~\cite{Wei}.
Since the set $U$ has nonempty interior in $(G,\tau)$, there exists a finite subset $F$
of $G$ such that $G=F+U$. Since $F$ is finite, it suffices to show that $F$
is contained in a subsemigroup $S$ of $G$, generated by $U$. We claim that $S=G$.
Indeed, following~\cite[Section 5.1]{BR2020}, consider a (not necessarily Hausdorff)
paratopological group $G_S$ whose topology consists of the sets $A+S$ where $A\subseteq G$
is any subset. Since the topology of the group $G_S$ is weaker than the topology of $G$ and $G$ is
precompact, the group $G_S$ is precompact too, and so by Proposition 5.8
from~\cite{BR2020}, $S$ is a subgroup of $G$. % of finite index.
Since $G$ is nondiscrete, $U$ contains
cosets $a+\Z$ for arbitrarily small positive numbers $a$, so $S$ is dense in $G$. Since $U$
is open in $G$, $S$ is open in $G$. So $S$ is an open dense subgroup of $G$, that implies $S=G$.
\end{example}

\begin{question}\label{qPremExistsZFC} Whether there exists under ZFC a nondiscrete premeager
(regular) Hausdorff (pa\-ra)\-to\-po\-lo\-gi\-cal group?
\end{question}

%By Proposition~\ref{pPremExists}, an answer to Question~\ref{qPremExistsZFC}
%is affirmative provided the following question has an affirmative answer.
%
%\begin{wquestion} Whether there exists under ZFC a paratopological meagerly divisible Abelian group
%$H$ such that $\cov(\mathcal{M}_H)=\cof(\mathcal{M}_H)=k>\omega$ and
%$G$ contains no discrete subgroups of size $\kappa$?
%\end{wquestion}

The class of nonpremeager topological groups is rather wide. For instance,
according to exercises from~\cite[Section 13]{PB}, a topological group $G$
contains a closed nowhere dense (and left Haar null in all cases below but the first)
subset $N$ such that $NN^{-1}=G$
in the following cases:
\begin{itemize}
\item $G$ is nondiscrete Polish Abelian;
\item $G$ is nondiscrete locally compact $T_1$;
\item $G$ is metrizable and contains a closed connected Lie subgroup;
\item $G$ is a $T_1$ real linear topological space, which is metrizable or locally convex;
\item $G$ is the group of homeomorphisms of the Hilbert cube, endowed with the compact-open topology.
\end{itemize}

%\begin{idea} Maybe we can somehow find a big (in some sense) nowhere dense subset $N$ of a group $G$.
%For instance, let $G$ be a separable nondiscrete (semi)(para)topological group. Let $\{x_n\}$ be a dense subset of the group $G$, and $\{U_n\}$ be a sequence of open neighborhoods of the identity such that $U_{n+1}^2\subseteq U_n$ for each $n$ and $x_nU_n^2\not\ni \{x_1,\dots, x_{n_1}\}$.
%Then  set $N=G\setminus \bigcup x_nU_n$ is nowhere dense, but maybe $N\cap gN^{-1}\ne\varnothing$ for each $g\in G$, because the set $N$ is big in some sense.
%\end{idea}

Moreover, according to~\cite[Question 13.6]{PB} it is not known even whether
there exists under ZFC a $T_1$ topological group that cannot be %(topologically)
generated by its meager subset.

%Proposition 13.7. Under Martin Axiom the real line R contains a
%dense additive subgroup G such that no meager subset of G generates G.
%Exercise 13.11. Show that under Martin Axiom each divisible
%Polish Abelian topological group contains a dense subgroup that
%cannot be generated by a meager subset.

%Proposition 13.8. Under Continuum Hypothesis the uncountable
%product of lines $\R^{\omega_1}$ contains a dense nonseparable subgroup $G$ such
%that each meager subset of $G$ generates a metrizable separable
%subgroup in $G$.
%---

A special case of a nowhere dense subset of a $T_1$ crowded space is a discrete subset.
There is a known problem when a $T_1$ (para)topological group $G$ can be (topologically) generated
by its discrete subset $S$ such that $S\cup\{e\}$ is closed. Namely,
such sets for topological groups were considered by Hofmann and Morris in~\cite{HM1990}
and by Tkachenko in~\cite{Tka97}.
Fun\-da\-men\-tal results were obtained by Comfort et al. in~\cite{CMRS1998}
and Dikranjan et al. in~\cite{DTT1999} and in~\cite{DTT2000}.
In~\cite{LRT} Lin et al. extended this research to paratopological groups.

%that topologically generating closed sets with a unique nonisolated
%point (so-called suitable sets) in topological groups were intensively
%studied during the last decade, see [9], [52].
% Many examples of topological groups without suitable sets were constructed in [52].

A sample result is Theorem 2.2 from~\cite{CMRS1998} stating that
each countable Hausdorff topological group is generated by a closed discrete set.
Protasov and Banakh in Theorem 13.3 of ~\cite{PB}
generalized this and Guran's~\cite{G2003} results to Hausdorff left topological groups.


\begin{thebibliography}{}

\bibitem{ArhTka}
Alexander Arhangel'ski\u{\i}, Mikhail Tkachenko,
{\it Topological Groups and Related Structures},
Atlantis Press, 2008.

\bibitem{Aza}
K.H. Azar,
{\it Bounded topological groups},
\url{arxiv.org/abs/1003.2876}

\bibitem{BGJS}
T.~Banakh, S.~G\l \c ab, E.~Jab\l o\'nska, J.~Swaczyna,
{\it  Haar-I sets: looking at small sets in Polish groups through compact glasses},
Dissert. Math. {\bf 564} (2021), 1--105.
\url{arxiv.org/abs/1803.06712v4}

\bibitem{BR2020}
Taras Banakh, Alex Ravsky,
{\it On feebly compact paratopological groups},
Topology Appl. {\bf 284} (2020), 107363.
\url{www.sciencedirect.com/science/article/abs/pii/S0166864120303060}

\bibitem{CMRS1998}
W.W. Comfort, S.A. Morris, D. Robbie, S. Svetlichny, M. Tkachenko,
{\it Suitable sets for topological groups},
Topol. Appl. {\bf 86} (1998), 25--46.

\bibitem{DikProSto}
D. Dikranjan, I. Prodanov, L. Stoyanov,
{\it Topological Groups: Characters Dualities and Minimal Group Topologies},
(2nd edn.), Monographs and Textbooks in Pure and Applied Mathematics, {\bf 130}, Marcel Dekker,
New York (1989).

\bibitem{DTT1999}
D. Dikranjan, M. Tkachenko, V. Tkachuk,
{\it Some topological groups with and some without suitable sets},
Topol. Appl., {\bf 98} (1999), 131--148.

\bibitem{DTT2000}
D. Dikranjan, M. Tka\v{c}enko, V. Tkachuk,
{\it Topological groups with thin generating sets},
J. Pure Appl. Algebra, {\bf 145}:2 (2000), 123--148.

\bibitem{G2003}
I. Guran,
{\it Suitable sets for paratopological groups},
Abstracts of 4-th International Algebraic Conference in Ukraine (Lviv, 2003), 87--88.
\url{prima.lnu.edu.ua/faculty/mechmat/Departments/Topology/Gutik.files/Fourth%20International%20Algebraic%20Conference%20in%20Ukraine.pdf}

\bibitem{HM1990}
K.H. Hofmann, S.A. Morris,
{\it Weight and $\mathfrak{c}$},
J. Pure Appl. Algebra, {\bf 68} (1990), 181--194.

\bibitem{Kech}
A.S. Kechris,
{\it Classical Descriptive Set Theory},
Springer, 1995.

\bibitem{Kun}
K. Kunen,
{\it Luzin spaces},
Topology Proc. {\bf I} (Conf., Auburn Univ., Auburn, Ala., 1976), (1977) 191--199.
\url{www.topology.auburn.edu/tp/reprints/v01/tp01021.pdf}

\bibitem{LinLin}
Fucai Lin, Shou Lin,
{\it Pseudobounded or $\omega$-pseudobounded paratopological groups},
Filomat {\bf 25}:3 (2011), 93--103.
\url{www.doiserbia.nb.rs/img/doi/0354-5180/2011/0354-51801103093L.pdf}

\bibitem{LinLinSan}
Fucai Lin, Shou Lin, Iv\'an S\'anchez,
{\it A note on pseudobounded paratopological groups},
Topol. Algebra Appl. {\bf 2} (2014), 11--18.
\url{www.ndsy.cn/jsfc/linsuo/LSPapers/LSp192.pdf}

\bibitem{LRT}
Fucai Lin, Alex Ravsky, Tingting Shi,
{\it Suitable sets for paratopological groups}, {\bf 115} (2021), 183.
\url{doi.org/10.1007/s13398-021-01129-w}

\bibitem{Lus}
N.N. Lusin,
{\it Sur un probl\`eme de M. Baire},
C.R. Acad. Sci. Paris {\bf 158} (1914), 1258--1261.

\bibitem{Pon}
Lev Pontrjagin,
{\it Continuous groups},
Nauka, Moscow, 1973, in Russian.

\bibitem{PB}
Igor Protasov, Taras Banakh,
{\it Ball structures and colorings of graphs and groups},
VNTL Publ., Lviv, 2003.

\bibitem{Rav}
Alex Ravsky,
{\it Paratopological groups I}.
Matematychni Studii {\bf 16}:1 (2001), 37--48.

\bibitem{Rav2}
Alex Ravsky,
{\it Paratopological groups II},
Matematychni Studii {\bf 17}:1 (2002), 93--101.
\url{matstud.org.ua/texts/2002/17_1/93_101.pdf}

\bibitem{Rav3}
Alex Ravsky,
{\it The topological and algebraical properties of paratopological groups},
Ph.D. Thesis, Lviv University, Lviv, 2002, in Ukrainian.

\bibitem{Tka97}
M. Tkachenko,
{\it Generating dense subgroups of topological groups},
Topology Proc. {\bf 22} (1997), 533--582.

\bibitem{Tka}
Mikhail Tkachenko,
{\it Semitopological and paratopological groups vs topological groups},
In: Recent Progress in General Topology III (K.P. Hart, J. van Mill, P. Simon, eds.), 2013, 803--859.

\bibitem{Wei}
A. Weil,
{\it Sur les Espaces \`a Structure Uniforme et sur la Topologie G\'en\'erale},
Publications in Mathematics University of Strasbourg, {\bf 551}, Hermann \& Cie, Paris (1938).

%\bibitem{Wik}
%Wikipedia, {\it Example of a Luzin set}.
%\url{en.wikipedia.org/wiki/Luzin_space#Example_of_a_Luzin_set}
\end{thebibliography}
\end{document}